%
%

\documentclass{amsart}
\usepackage{amssymb}
\usepackage[ruled,vlined]{algorithm2e}
\usepackage{url}
\usepackage{booktabs}
\usepackage{multirow}

\newcommand{\assign}{\leftarrow}
\newcommand{\ep}{\varepsilon}
\newcommand{\ZZ}{\mathbf{Z}}
\newcommand{\divides}{\mathbin|}
\newcommand{\ndivides}{\mathbin{\nmid}}

\newtheorem{thm}{Theorem}
\newtheorem{prop}[thm]{Proposition}

\newtheorem*{lem*}{Lemma}
\theoremstyle{remark}
\newtheorem*{remark}{Remark}

\begin{document}

\title{A multimodular algorithm for computing Bernoulli numbers}
\date{13th October 2008}
\author{David Harvey}
\address{Courant Institute of Mathematical Sciences, New York University, 251 Mercer St, New York NY 10012, USA}
\email{dmharvey@cims.nyu.edu}
\urladdr{http://cims.nyu.edu/\~{}harvey/}

\begin{abstract}
We describe an algorithm for computing Bernoulli numbers. Using a parallel implementation, we have computed $B_k$ for $k = 10^8$, a new record. Our method is to compute $B_k$ modulo $p$ for many small primes $p$, and then reconstruct $B_k$ via the Chinese Remainder Theorem. The asymptotic time complexity is $O(k^2 \log^{2+\ep} k)$, matching that of existing algorithms that exploit the relationship between $B_k$ and the Riemann zeta function. Our implementation is significantly faster than several existing implementations of the zeta-function method.
\end{abstract}

\maketitle

\section{Introduction}
\label{sec:introduction}

The Bernoulli numbers $B_0, B_1, B_2, \ldots$ are rational numbers defined by
\[
 \frac{x}{e^x - 1} = \sum_{k=0}^\infty \frac{B_k}{k!} x^k.
\]
In this paper we focus on the problem of computing $B_k$, as an exact rational number, for a single large (even) value of $k$. To date, the most efficient algorithms for computing an isolated $B_k$ have been based on the formula
\begin{equation}\label{eq:bern-zeta}
 B_k = (-1)^{k/2+1} \frac{2 \zeta(k) k!}{(2\pi)^k}, \qquad \text{$k \geq 2$ and $k$ even}, 
\end{equation}
where $\zeta(s)$ is the Riemann zeta function \cite[p.~231]{ireland}. One first computes a high-precision approximation for $B_k$ via \eqref{eq:bern-zeta}, using the Euler product for $\zeta(s)$. The exact denominator of $B_k$ is known by the von Staudt--Clausen theorem. Provided that the numerical approximation to $B_k$ has enough correct bits --- it turns out that $O(k \log k)$ bits is enough --- we may recover the numerator of $B_k$ exactly. We will refer to this algorithm and its variants as the `zeta-function algorithm'.

The zeta-function algorithm has been rediscovered numerous times. The formula \eqref{eq:bern-zeta} itself goes back to Euler. Chowla and Hartung \cite{chowla-hartung} give an `exact formula' for $B_k$, the evaluation of which is tantamount to executing the algorithm, but using the defining sum for $\zeta(k)$ instead of the Euler product, and using $2(2^k - 1)$ in place of the exact denominator of $B_k$. Fillebrown \cite{fillebrown} mentions \cite{chowla-hartung} and points out that using the Euler product is asymptotically faster; she gives a detailed time and space complexity analysis, and attributes the algorithm to Herbert Wilf (unpublished). Fee and Plouffe \cite{fee-plouffe} state that they discovered and implemented the zeta-function algorithm in 1996. Kellner \cite{kellner} also refers to \cite{chowla-hartung} and suggests using the Euler product. He mentions some data produced by Fee and Plouffe, although not their algorithm. According to Pavlyk \cite{pavlyk}, the implementation in Mathematica 6 \cite{mathematica} derives from Kellner's work. Stein \cite[p.~30]{stein} indicates another independent discovery by Henri Cohen, Karim Belabas, and Bill Daly (unpublished), that led to the implementation in PARI/GP \cite{pari}.

In this paper we present a new algorithm for computing $B_k$ that does not depend on \eqref{eq:bern-zeta} or any properties of $\zeta(s)$. The basic idea is to compute $B_k$ modulo $p$ for sufficiently many small primes $p$, and then reconstruct $B_k$ using the Chinese Remainder Theorem. Using a parallel implementation, we have computed $B_k$ for $k = 10^8$, improving substantially on the previous record of $k = 10^7$ \cite{pavlyk}.

Let $M(n)$ denote the time (bit operations) required to multiply two $n$-bit integers. We may assume that $M(n) = O(n \log^{1+\ep} n)$, using for example the Sch\"onhage--Strassen algorithm \cite{schonhage-strassen}. According to Fillebrown's analysis \cite[p.~441]{fillebrown}, the zeta-function algorithm for computing $B_k$ requires $O(k)$ multiplications of integers with $O(k \log k)$ bits, so its running time is $O(k^2 \log^{2+\ep} k)$. The new algorithm also runs in time $O(k^2 \log^{2+\ep} k)$. However, we will see that over the feasible range of computation the running time behaves more like $O(k^2 \log k)$.

The new algorithm is easily parallelisable, as the computations for the various $p$ are independent. Only the final stages of the modular reconstruction are more difficult to execute in parallel, but these account for only $O(k^{1+\ep})$ of the running time. During the modular computations, each thread requires only $O(\log^{1+\ep} k)$ space. The zeta-function algorithm may also be parallelised, for instance by mapping Euler factors to threads, but this requires $O(k \log k)$ space per thread.

\section{Computing $B_k$ modulo $p$}
\label{sec:modular}

Throughout this section $p \geq 5$ denotes a prime, and $k$ an even integer satisfying $2 \leq k \leq p-3$. Our algorithm for computing $B_k$ modulo $p$ is based on the following well-known congruence. Let $0 < c < p$, and suppose that $c^k \not\equiv 1 \pmod p$. Then
\begin{equation}\label{eq:sum-first}
 B_k \equiv \frac{k}{1 - c^k} \sum_{x=1}^{p-1} x^{k-1} h_c(x) \pmod p,
\end{equation}
where
 \[ h_c(x) := \frac{(x \bmod p) - c(x/c \bmod p)}{p} + \frac{c-1}2. \]
Equation \eqref{eq:sum-first} may be deduced by reading the statement of Theorem 2.3 of \cite[\S2]{lang-cyclotomic} modulo $p$ (as is done in the proof of Corollary 2 to that theorem).

Equation \eqref{eq:sum-first} may be used to compute $B_k$ modulo $p$ directly, but computing $x^{k-1}$ modulo $p$ for each $x$ is unnecessarily expensive. Instead, we select a generator $g$ of the multiplicative group $(\ZZ/p\ZZ)^\times$, put $c = g$, and rewrite the sum as
\begin{equation}\label{eq:sum-gen}
 B_k \equiv \frac{k}{1 - g^k} \sum_{i=1}^{p-1} g^{i(k-1)} h_g(g^i) \pmod p.
\end{equation}
Note that $g^k \not\equiv 1 \pmod p$ since $p-1 \ndivides k$. For $x \not\equiv 0 \pmod p$ we have $h_c(-x) = -h_c(x)$, and $g^{(p-1)/2} \equiv -1 \pmod p$, so we obtain the half-length sum
\begin{equation}\label{eq:sum-gen-half}
 B_k \equiv \frac{2k}{1 - g^k} \sum_{i=1}^{(p-1)/2} g^{i(k-1)} h_g(g^i) \pmod p,
\end{equation}
leading to the following algorithm.

\begin{algorithm}
\label{algo:simple}
\SetLine
\dontprintsemicolon
$g \assign \text{a generator of $(\ZZ/p\ZZ)^\times$}$\;
$r \assign g^{k-1} \pmod p$\;
$u \assign (g-1)/2 \pmod p$\;
$S \assign 0$, \, $X \assign 1$, \, $Y \assign r$\;
\For{$i \assign 1$ \KwTo $(p-1)/2$}{
$q \assign \lfloor g X / p \rfloor$\;
$S \assign (S + (u - q)Y) \bmod p$\;
$X \assign g X \bmod p$, \, $Y \assign r Y \bmod p$\;
}
\KwRet $2kS / (1 - g^k) \bmod p$\;
\caption{Compute $B_k$ modulo $p$}
\end{algorithm}

\begin{prop}
\label{prop:modular}
Let $p \geq 5$ be a prime, and let $k$ be an even integer in the interval $2 \leq k \leq p-3$. Algorithm \ref{algo:simple} computes $B_k$ modulo $p$ in time $O(p \log^{1+\ep} p)$.
\end{prop}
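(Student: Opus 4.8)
The plan is to establish correctness and the running-time bound separately. For correctness, I would first check that the hypotheses make \eqref{eq:sum-gen-half} applicable: since $2 \leq k \leq p-3$ we have $0 < k < p-1$, hence $p-1 \ndivides k$ and $g^k \not\equiv 1 \pmod p$, so $1-g^k$ is a unit modulo $p$; also $2$ is a unit since $p \geq 5$, so $u = (g-1)/2 \bmod p$ and the returned value $2kS/(1-g^k) \bmod p$ are well defined. The heart of the argument is a loop invariant: at the start of the $i$-th iteration one has $X \equiv g^{i-1} \pmod p$ with $0 \leq X < p$, $Y \equiv g^{i(k-1)} \pmod p$, and $S \equiv \sum_{j=1}^{i-1} g^{j(k-1)} h_g(g^j) \pmod p$. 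The base case $i=1$ is immediate from the initialisation $X \assign 1$, $Y \assign r = g^{k-1} \bmod p$, $S \assign 0$. For the inductive step, write $q = \lfloor gX/p \rfloor$; since $gX = g\,(g^{i-1} \bmod p) \equiv g^i \pmod p$ and $0 \leq gX - qp < p$, we get $gX - qp = g^i \bmod p$, so the integer in the numerator of $h_g(g^i)$ is
\[
 (g^i \bmod p) - g\,(g^{i-1} \bmod p) = (gX - qp) - gX = -qp,
\]
whence $h_g(g^i) = -q + \tfrac{g-1}{2} \equiv u - q \pmod p$. Thus the update $S \assign S + (u-q)Y$ adds exactly $g^{i(k-1)} h_g(g^i)$, and $X \assign gX \bmod p = g^i \bmod p$, $Y \assign rY \bmod p = g^{(i+1)(k-1)} \bmod p$ restore the invariant for $i+1$. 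After the last iteration $S \equiv \sum_{i=1}^{(p-1)/2} g^{i(k-1)} h_g(g^i)$, and \eqref{eq:sum-gen-half} shows the returned value is $\equiv B_k \pmod p$.

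The step I expect to be the main (though modest) obstacle is precisely this reinterpretation of the quotient $q$: one must observe that the ``difference of residues'' $(g^i \bmod p) - g\,(g^{i-1}\bmod p)$ is an integer multiple of $p$, namely $-qp$, which is what legitimises carrying $g^{i-1} \bmod p$ forward in $X$ rather than recomputing it (and what lets the division that produces $q$ simultaneously produce the new value of $X$). A secondary point worth stating explicitly is that $\tfrac{g-1}{2}$ is a genuine rational — a half-integer when $g$ is even — that is being reduced modulo $p$ via $2^{-1}$; since the other summand of $h_g(g^i)$ is an integer, the congruence $h_g(g^i) \equiv u - q \pmod p$ is unambiguous, but the $p$-integrality is what needs to be noted.

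For the running time, the loop runs $(p-1)/2 = O(p)$ times, and each iteration performs a bounded number of additions, multiplications, and one division with remainder, all on nonnegative integers less than $p^2$ (note $gX < p^2$, $|u-q| < p$, $|(u-q)Y| < p^2$); each such operation costs $O(M(\log p)) = O(\log p \,\log^{1+\ep} \log p) = O(\log^{1+\ep} p)$, giving a loop cost of $O(p \log^{1+\ep} p)$. For the precomputation: computing $r = g^{k-1} \bmod p$ and $g^k \bmod p$ by repeated squaring, inverting $1-g^k$, forming $u$, and the final multiplication by $2k$ take $O(\log p)$ modular multiplications and one modular inversion, i.e.\ $O(\log^{2+\ep} p)$; and a generator $g$ can be found by factoring $p-1$ by trial division in $O(p^{1/2+\ep})$ time and then testing candidates (a random candidate works with probability $\phi(p-1)/(p-1) = \Omega(1/\log\log p)$, each test being $O(\log p)$ exponentiations). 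All of this is $o(p \log^{1+\ep} p)$, so the total running time is $O(p \log^{1+\ep} p)$, as claimed.
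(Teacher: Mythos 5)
Your proof is correct and follows essentially the same approach as the paper: a loop invariant for correctness (tracking $X$, $Y$, $S$), the observation that the quotient $q = \lfloor gX/p \rfloor$ simultaneously produces the reduction $gX \bmod p$ and satisfies $h_g(g^i) \equiv u - q \pmod p$, and a straightforward operation count for the running time. The one place you diverge is the generator-finding subroutine: the paper cites Shparlinski's deterministic $O(p^{1/4+\ep})$ algorithm, whereas you propose trial-factoring $p-1$ in $O(p^{1/2+\ep})$ and then randomly sampling candidates, which gives an expected-time rather than worst-case bound; since either is $o(p\log^{1+\ep}p)$ this does not affect the conclusion, though the paper's choice keeps the whole algorithm deterministic.
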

\begin{proof}
At the top of the loop we have $X = g^{i-1} \pmod p$ and $Y = g^{i(k-1)} \pmod p$. The value assigned to $q$ is $(g(g^{i-1} \bmod p) - (g^i \bmod p))/p = u - h_g(g^i)$. Therefore the value added to $S$ is $g^{i(k-1)} h_g(g^i)$. By \eqref{eq:sum-gen-half} the return value is $B_k$ modulo $p$.

We now analyse the complexity. A generator of $(\ZZ/p\ZZ)^\times$ may be found deterministically in time $O(p^{1/4+\ep})$ \cite{shparlinski}. In the main loop, the algorithm performs $O(p)$ additions, multiplications and divisions (with remainder) of integers with $O(\log p)$ bits. The computation of $r$ requires another $O(\log k) = O(\log p)$ such operations. The division by $1 - g^k$ requires an extended GCD of integers with $O(\log p)$ bits. The total cost is $O(p \log^{1+\ep} p)$.
\end{proof}

\begin{remark}
To compute $B_{10^8}$ (see \S\ref{sec:examples}), the largest prime needed is $1558322063$. Even on a 32-bit machine, this fits into a single machine word, and the cost of arithmetic modulo $p$ does not depend on $p$. Therefore, over the feasible range of computation, the complexity of Algorithm \ref{algo:simple} may be regarded as only $O(p)$.
\end{remark}

\section{Computing $B_k$ as a rational number}
\label{sec:rational}

In this section we present a multimodular algorithm for computing $B_k = N_k/D_k$ as an exact rational number. Before getting to the algorithm proper, we make some preliminary calculations. In what follows, we assume that $k \geq 4$ and that $k$ is even.

The denominator $D_k$ is given by the von Staudt--Clausen theorem \cite[p.~233]{ireland}:
 \[ D_k = \prod_{\substack{\text{$p$ prime} \\ p-1 \divides k}} p. \]
Note that $D_k \leq 2^{k + 1}$, since $D_k$ divides $2(2^k - 1)$ \cite[p.~114]{chowla-hartung}. 

The size of the numerator $N_k$ may be bounded quite tightly as follows. From \eqref{eq:bern-zeta} and Stirling's approximation \cite[p.~120]{lang-analysis} we have
\[
\begin{aligned}
 \log |B_k| & = \log 2 + \log \zeta(k) + \log k! - k \log 2\pi \\
   & \leq \left(k + \frac12\right) \log k - (1 + \log 2\pi)k + (\log 2 + \log \zeta(k) + \log \sqrt{2\pi} ) + \frac{1}{12k}.
\end{aligned}
\]
Since $\zeta(k) \leq \zeta(4)$ and $12k \geq 48$, a short calculation shows that $|N_k| < 2^\beta$ where $\beta := \lceil(k + 0.5) \log_2 k - 4.094 k + 2.470 + \log_2 D_k\rceil$.

For any $X > 0$, let $M_X := \prod_{p \leq X, p - 1 \ndivides k} p$. Our strategy will be to select $X$ so that $M_X \geq 2^\beta$, and then compute $N_k$ modulo $M_X$. This ensures that we have sufficient information to reconstruct $N_k$. In the algorithm itself our choice of $X$ will be quite sharp, but we also need a rough \emph{a priori} estimate. We will take $Y := \max(37, \lceil(k + 0.5) \log_2 k\rceil)$. To check that this works, we will use the estimate $\sum_{p \leq x} \log p \geq 0.73x$, a weak form of the prime number theorem, valid for $x \geq 37$ \cite[p.~111]{narkiewicz}. Then we have
\[
\begin{aligned}
 \log_2 M_Y
     & = -\log_2 D_k + \sum_{p \leq Y} \log_2 p \\
     & \geq -(k+1) + \frac{0.73}{\log 2} Y \\
     & \geq Y - k - 1 \\
     & \geq (k + 0.5) \log_2 k - k - 1 \\
     & \geq (k + 0.5) \log_2 k - 3.094k + 6.470  \quad (\text{since $k \geq 4$}) \\
     & \geq (k + 0.5) \log_2 k - 4.094k + 3.470 + \log_2 D_k + 2\\
     & \geq \lceil(k + 0.5) \log_2 k - 4.094k + 2.470 + \log_2 D_k \rceil + 2 \\
     & = \beta + 2,
\end{aligned}
\]
so that $M_Y \geq 2^{\beta + 2}$. Algorithm \ref{algo:rational} presents the resulting algorithm; several important details are given in the proof of the following theorem.

\begin{algorithm}
\label{algo:rational}
\SetLine
\dontprintsemicolon
\tcp{compute $D_k$ and preliminary bounds}\;
$Y \assign \max(37, \lceil(k + 0.5)\log_2 k \rceil)$\;
Compute list of all primes $p \leq Y$\;
$D_k \assign \prod_{p-1 \divides k} p$\;
$\beta \assign \lceil(k + 0.5) \log_2 k - 4.094 k + 2.470 + \log_2 D_k\rceil$\;
\medskip
\tcp{compute tight bound $X$}\;
$p \assign 3$, \, $M' \assign 1.0$\;
\While{$M' < 2^{\beta + 1}$}{
   $p \assign \mathrm{NextPrime}(p)$\;
   \lIf{$p - 1 \ndivides k$}{$M' \assign p M'$}\;
}
$X \assign p$\;
\medskip
\tcp{collect modular information}\;
\lFor{$p \in \{2, 3, 5, \ldots, X\}, p - 1 \ndivides k$}{$r_p \assign B_k \pmod p$}\;
\medskip
\tcp{reconstruction}\;
$M \assign M_X = \prod_{p \leq X, p - 1 \ndivides k} p$\;
$R \assign $ unique integer in $[0, M)$ congruent to $r_p$ modulo $p$ for all primes $p \divides M$\;
$N' \assign D_k R \pmod M$\;
\lIf{$k \equiv 2 \pmod 4$}{$N_k \assign N'$} \lElse{$N_k \assign N' - M$}\;
\KwRet{$N_k/D_k$}\;
\caption{Compute $B_k$ as a rational number ($k \geq 4$, $k$ even)}
\end{algorithm}

\begin{thm}\label{thm:rational}
Algorithm \ref{algo:rational} computes $B_k$ in time $O(k^2 \log^{2+\ep} k)$.
\end{thm}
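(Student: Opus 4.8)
The plan is to verify correctness phase by phase and then bound the cost of each phase, the point being that one phase --- the modular computations --- dominates. \emph{Correctness.} The denominator line is exactly the von Staudt--Clausen theorem recalled above. For the line ``$r_p \assign B_k \pmod p$'', note first that every prime $p$ occurring there has $p \ge 5$: for even $k$ both $p = 2$ and $p = 3$ satisfy $p - 1 \divides k$ and are excluded. Since $p - 1 \ndivides k$, the even integer $k' := k \bmod (p - 1)$ lies in $\{2, 4, \ldots, p - 3\}$, so $0 < k' < p$ and $k'$ is invertible modulo $p$. Kummer's congruence gives $B_k/k \equiv B_{k'}/k' \pmod p$ (both sides $p$-integral precisely because $p - 1 \ndivides k$), whence $r_p$ may be computed as $(k / k')\,(B_{k'} \bmod p) \bmod p$, with $B_{k'} \bmod p$ supplied by Algorithm~\ref{algo:simple} --- and $2 \le k' \le p - 3$ is exactly the hypothesis of Proposition~\ref{prop:modular}. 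For the reconstruction, write $B_k = N_k/D_k$ in lowest terms; then $p \nmid D_k$ for every $p \divides M$, so $D_k R \equiv D_k B_k = N_k \pmod M$ and hence $N' \equiv N_k \pmod M$ with $N' \in [0, M)$. By \eqref{eq:bern-zeta} the numerator satisfies $N_k > 0$ iff $k \equiv 2 \pmod 4$, and $|N_k| < 2^\beta$ by the size estimate derived before the algorithm; once $M = M_X \ge 2^\beta$ (see below), this forces $N_k = N'$ when $k \equiv 2 \pmod 4$ and $N_k = N' - M$ otherwise, which is precisely the return value.

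\emph{The \textbf{while} loop.} It remains to show the loop halts with $X \le Y$ and that on exit $M_X \ge 2^\beta$; here a little care is needed because $M'$ is a floating-point quantity. Carrying $M'$ with $O(\log k)$ bits of precision, the accumulated relative error over the $O(\pi(Y)) = O(k)$ multiplications stays below a factor $\sqrt 2$, so at all times $M_p/\sqrt2 \le M' \le \sqrt2\,M_p$, where $M_p := \prod_{q \le p,\,q - 1 \ndivides k} q$. The display immediately preceding the algorithm shows $M_Y \ge 2^{\beta + 2}$; hence by the time $p$ reaches $Y$ we would have $M' > 2^{\beta + 1}$, so the loop has already halted and $X \le Y = O(k \log k)$. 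At the halting step $M_X = M_p \ge M'/\sqrt2 \ge 2^{\beta+1}/\sqrt2 > 2^\beta$, as required. (One could instead keep $M'$ as an exact integer, at a cost still within the claimed bound.)

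\emph{Complexity.} Sieving the primes $p \le Y$ costs $O(Y \log\log Y) = O(k^{1+\ep})$. Every prime $p$ with $p - 1 \divides k$ satisfies $p \le k + 1 \le Y$, so $D_k$ is obtained by testing $p - 1 \divides k$ for each sieved prime and forming the product of the survivors (a number of $O(k)$ bits) with a subproduct tree, in $O(k \log^{2+\ep} k)$; computing $\beta$ from $k \log_2 k$ and $\log_2 D_k$ to $O(\log k)$ bits is likewise negligible. The \textbf{while} loop runs $O(\pi(Y)) = O(k)$ iterations, each costing $O(\log^{1+\ep} k)$, for $O(k \log^{1+\ep} k)$ total. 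In the modular phase, each prime $p \le X$ with $p - 1 \ndivides k$ costs $O(\log^{1+\ep} k)$ for the reduction $k \mapsto k'$ plus $O(p \log^{1+\ep} p)$ for Algorithm~\ref{algo:simple} by Proposition~\ref{prop:modular}; summing and using $\sum_{p \le X} p = O(X^2 / \log X)$ from the prime number theorem gives $\sum_{p \le X} O(p \log^{1+\ep} p) = O(X^2 \log^\ep X) = O(k^2 \log^{2+\ep} k)$, which is the dominant term since $X = O(k \log k)$. Finally, reconstructing $R$ from its $O(\pi(X)) = O(k)$ residues, whose moduli have $\log_2 M = O(k \log k)$ bits in total, costs $O(M(k \log k) \log k) = O(k \log^{3+\ep} k)$ with a remainder tree, and computing $D_k R \bmod M$ and the conditional subtraction of $M$ costs $O(M(k \log k)) = O(k \log^{2+\ep} k)$. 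Summing all phases gives $O(k^2 \log^{2+\ep} k)$.

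The main obstacle is twofold. First, Algorithm~\ref{algo:simple} as stated requires $k \le p - 3$, which fails for the many small primes $p < k$, so one must justify the Kummer-congruence reduction to $k'$ and check that the reduced computation still runs in $O(p \log^{1+\ep} p)$. Second, the floating-point bookkeeping for the \textbf{while} loop must be done carefully enough to guarantee simultaneously that the loop terminates by $Y$ and that $M_X \ge 2^\beta$ on exit, the latter being exactly what makes the sign correction in the reconstruction valid. The remaining ingredients --- the asymptotics of $\sum_{p \le X} p$, the tree-based CRT, and the size bound for $N_k$ --- are routine.
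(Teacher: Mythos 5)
Your proof is correct and follows essentially the same structure as the paper's: verify each phase, dispose of the auxiliary phases in $O(k^{1+\ep})$, handle the $2 \le k \le p-3$ restriction by reducing $k$ modulo $p-1$ and invoking Kummer's congruence, do the floating-point error analysis for the \textbf{while}-loop to guarantee both termination by $Y$ and $M_X \ge 2^\beta$, and let the modular computations dominate. The only differences are cosmetic: you bound the accumulated floating-point error by $\sqrt 2$ where the paper uses $e^{1/2}$ (either works given $M_Y \ge 2^{\beta+2}$); you compute $D_k$ by testing $p-1 \divides k$ over the sieved primes rather than enumerating divisors of $k$ and testing $d+1$ for primality; and you bound the modular phase via $\sum_{p\le X}p = O(X^2/\log X)$, which is a slightly sharper accounting than the paper's $O(k)\times O(k\log^{2+\ep}k)$ but lands on the same $O(k^2\log^{2+\ep}k)$.
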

\begin{proof}
Computing all primes less than $Y$ requires time $O(k^{1+\ep})$, via an elementary sieving algorithm, since $Y = O(k \log k)$. In what follows we assume that all primality tests and enumeration of primes are performed with the aid of this list.

To compute $D_k$, make a list of the factors of $k$ (for example, by testing divisibility of $k$ by each integer in $[2, \sqrt k]$), and for each factor $d \divides k$ check whether $d+1$ is prime. Multiply together those that are prime using a product tree \cite[Algorithm 10.3, p.~293]{mca}. Since $\log D_k = O(k)$, this takes time $O(k^{1+\ep})$.

The goal of the while-loop is to quickly find a bound $X$, much tighter than $Y$, such that $M_X \geq 2^\beta$. The variable $M'$ holds an approximation to the product of the primes encountered so far. It should be represented in floating-point, with at least $n = \lceil\log_2 Y\rceil + 1$ bits in the mantissa, so that each multiplication by $p$ magnifies the relative error by at most $1 + 2^{-n}$. We claim that the loop terminates before exhausting the precomputed list of primes. Indeed, let $s$ be the number of $p$ such that $p \leq Y$ and $p-1 \ndivides k$. After $s$ iterations, $M'$ approximates $M_Y$ with relative error at most $(1 + 2^{-n})^s \leq (1 + (2Y)^{-1})^Y \leq e^{1/2}$; that is, $M' \geq e^{-1/2}M_Y \geq e^{-1/2} 2^{\beta+2} \geq 2^{\beta + 1}$. This is impossible since the loop termination condition would already have been met earlier. This argument also shows that the selected bound $X$ satisfies $M_X \geq e^{-1/2} 2^{\beta+1} \geq 2^\beta$. Since $Y = O(k \log k)$, computing $X$ takes time $O(k^{1+\ep})$.

In the for-loop, for each $p$ we use Algorithm \ref{algo:simple} to compute $r_p := B_k \pmod p$. Note that Algorithm \ref{algo:simple} requires that $2 \leq k \leq p-3$. To satisfy this requirement, we put $m = k \bmod (p-1)$, compute $B_m \pmod p$ using Algorithm \ref{algo:simple}, and then recover $B_k \pmod p$ via Kummer's congruence $B_k/k \equiv B_m/m \pmod p$
\cite[p.~41]{lang-cyclotomic}. The total number of primes processed is no more than $\pi(Y) = O(Y/\log Y) = O(k \log k / \log(k \log k)) = O(k)$. 
According to Proposition \ref{prop:modular}, the cost for each prime is $O(p \log^{1+\ep} p) = O(k \log^{2 + \ep} k)$, since $p \leq Y = O(k \log k)$. Therefore the total cost of the while-loop is $O(k^2 \log^{2+\ep} k)$.

We compute $M$ and $R$ simultaneously using fast Chinese Remaindering \cite[Theorem 10.25, p.~302]{mca}; the cost is $O((\log M_X)^{1+\ep}) = O((k \log k)^{1+\ep}) = O(k^{1+\ep})$.

In the last few lines we recover $N_k$. By construction we have $R \equiv B_k \pmod M$, so $N' \equiv N_k \pmod M$. Note that $|N_k| < 2^\beta \leq M$. If $k \equiv 2 \pmod 4$ then $N_k$ is positive so we simply have $N_k = N'$; otherwise $N_k$ is negative, and we must correct $N'$ by subtracting $M$.
\end{proof}

\begin{remark}
During the modular collection phase, we skipped those $p$ for which $p - 1 \divides k$. An alternative would be to use the fact that $pB_k \equiv -1 \pmod p$ for these $p$ \cite[p.~233]{ireland}, and to apply the multimodular algorithm directly to $N_k$ rather than to $B_k$. This would require the additional minor overhead of computing $D_k \pmod p$ for all of the other primes.
\end{remark}
\begin{remark}
As mentioned below the proof of Proposition \ref{prop:modular}, the running time of Algorithm \ref{algo:simple} behaves like $O(p)$ in practice. The corresponding bound for the running time of Algorithm \ref{algo:rational} is $O(k^2 \log k)$.
\end{remark}

\section{Implementation techniques for computing $B_k$ modulo $p$}
\label{sec:tricks}

In this section we sketch several techniques that yield a large constant factor improvement in the speed of computing $B_k$ modulo $p$, for almost all $k$ and $p$. In experiments we have found that these techniques yield an increase in speed by a factor in excess of $150$ compared to an efficient implementation of Algorithm \ref{algo:simple}.

We first perform some preparatory algebra. Consider again the congruence \eqref{eq:sum-first}. Let $n$ be the multiplicative order of $c$ in $(\ZZ/p\ZZ)^\times$, and put $m = (p-1)/n$. Let $g$ be a generator of $(\ZZ/p\ZZ)^\times$. Then $\{g^i c^{-j}\}_{0 \leq i < m, 0 \leq j < n}$ forms a complete set of representatives for $(\ZZ/p\ZZ)^\times$. Putting $x = g^i c^{-j}$, we obtain
\begin{equation}\label{eq:double-sum}
  B_k \equiv \frac{k}{1 - c^k} \sum_{\substack{0 \leq i < m \\ 0 \leq j < n}} (g^{k-1})^i (c^{k-1})^{-j} h_c(g^i c^{-j}) \pmod p.
\end{equation}
Considerations similar to those of \S\ref{sec:modular} allow us to halve the number of terms, as follows. First suppose that $n$ is even. Since $h_c(-x) = -h_c(x)$ and $c^{n/2} \equiv -1 \pmod p$, the double sum in \eqref{eq:double-sum} becomes
\[
\begin{aligned}
 & \mathrel{\phantom{\equiv}} \sum_{\substack{0 \leq i < m \\ 0 \leq j < n/2}} (g^{k-1})^i (c^{k-1})^{-j} h_c(g^i c^{-j}) + \sum_{\substack{0 \leq i < m \\ 0 \leq j < n/2}} (g^{k-1})^i (c^{k-1})^{-j-n/2} h_c(g^i c^{-j-n/2}) \\
 & \equiv 2\sum_{\substack{0 \leq i < m \\ 0 \leq j < n/2}} (g^{k-1})^i (c^{k-1})^{-j} h_c(g^i c^{-j}) \pmod p.
\end{aligned}
\]
Now suppose that $n$ is odd. Then $m$ is even, and we have $(-g^{m/2})^n \equiv (-1)^n g^{(p-1)/2} \equiv (-1)(-1) \equiv 1 \pmod p$, 
so that $-g^{m/2} \equiv c^\lambda \pmod p$ for some $0 \leq \lambda < n$. The double sum in \eqref{eq:double-sum} becomes
\[
\begin{aligned}
 & \mathrel{\phantom{\equiv}} \sum_{\substack{0 \leq i < m/2 \\ 0 \leq j < n}} (g^{k-1})^i (c^{k-1})^{-j} h_c(g^i c^{-j}) + \sum_{\substack{0 \leq i < m/2 \\ 0 \leq j < n}} (g^{k-1})^{i+m/2} (c^{k-1})^{-j} h_c(g^{i+m/2} c^{-j}) \\
 & \equiv \sum_{\substack{0 \leq i < m/2 \\ 0 \leq j < n}} (g^{k-1})^i (c^{k-1})^{-j} h_c(g^i c^{-j}) - \sum_{\substack{0 \leq i < m/2 \\ 0 \leq j < n}} (g^{k-1})^i (c^{k-1})^{-j+\lambda} h_c(-g^i c^{-j+\lambda}) \\
 & \equiv 2 \sum_{\substack{0 \leq i < m/2 \\ 0 \leq j < n}} (g^{k-1})^i (c^{k-1})^{-j} h_c(g^i c^{-j}) \pmod p.
\end{aligned}
\]
We combine both cases into a single statement by writing
\begin{equation}\label{eq:bern-combined}
  B_k \equiv \frac{2k}{1 - c^k} \sum_{0 \leq i < m'} (g^{k-1})^i \sum_{0 \leq j < n'} (c^{k-1})^{-j} h_c(g^i c^{-j}) \pmod p,
\end{equation}
where
 \[ n' = \begin{cases} n/2 & \text{$n$ even,} \\ n & \text{$n$ odd,} \end{cases} \qquad m' = \frac{(p-1)/2}{n'}. \]

The sum in \eqref{eq:bern-combined} may be evaluated by an algorithm similar to Algorithm \ref{algo:simple}, with an outer loop over $i$ and an inner loop over $j$; we omit the details. The inner loop is executed $n'm' = (p-1)/2$ times, the same number of iterations as in the main loop of Algorithm \ref{algo:simple}. Note that if $c$ is chosen `randomly', then we expect $n'$ to be quite large, so it is imperative to make the inner loop as efficient as possible.

To this end, we specialise to the case $c = 1/2 \pmod p$; this will allow us to exploit the fact that multiplication by $2$ modulo $p$ is very cheap. Of course, this specialisation does not work if $2^k \equiv 1 \pmod p$. In practice, such primes tend to be rare for any given $k$, and we may fall back on Algorithm \ref{algo:simple} to handle them.

Directly from the definition of $h_c(x)$, we have $h_{1/2}(x) = -f(x)/4$ where
 \[ f(x) := \begin{cases} 1 & \phantom{p/}0 < (x \bmod p) < p/2, \\ -1 & p/2 < (x \bmod p) < p. \end{cases} \]
From \eqref{eq:bern-combined} we obtain
\begin{equation}\label{eq:bern-2}
 B_k \equiv \frac{k}{2(2^{-k} - 1)} \sum_{0 \leq i < m'} (g^{k-1})^i \sum_{0 \leq j < n'} (2^{k-1})^j f(g^i 2^j) \pmod p,
\end{equation}
where $n$ is the order of $2$ in $(\ZZ/p\ZZ)^\times$, and $n'$ and $m'$ are defined as before.

In the algorithm that evaluates \eqref{eq:bern-2}, the inner loop is considerably simpler than the main loop of Algorithm \ref{algo:simple}. Given $g^i 2^j$ at the beginning of the loop, we compute the next term $g^i 2^{j+1}$ by doubling it and subtracting $p$ if necessary, and $f(g^i 2^j)$ is $-1$ or $+1$ according to whether the subtraction occurred. Therefore only a single modular multiplication is required on each iteration, to keep track of $(2^{k-1})^j$.

Next we describe further optimisations for computing a sum of the form
\begin{equation}\label{eq:inner-sum}
 \sum_{0 \leq j < N} r^j f(2^j s) \pmod p.
\end{equation}
(For evaluating \eqref{eq:bern-2}, we take $N = n'$, $r = 2^{k-1}$, $s = g^i$.) The following observation is crucial: if $0 < s < p$, and if the binary expansion of $s/p$ is $0 . b_0 b_1 b_2 \ldots$ (where each $b_j$ is 0 or 1), then $f(2^j s) = (-1)^{b_j}$. Indeed, the binary expansion of $2^j s/p$ is obtained by shifting that of $s/p$ to the left by $j$ digits, so
 \[ (2^j s \bmod p)/p = 0.b_j b_{j+1} \ldots, \]
and then
 \[ f(2^j s) = +1 \iff 0 < (2^j s \bmod p) < p/2 \iff b_j = 0. \]
Assume that we have available a routine for computing the binary expansion of $s/p$, whose output is a sequence of base-$2^m$ digits. Strip off the last $N \bmod m$ terms of \eqref{eq:inner-sum} and compute them separately; we may then assume that $N$ is divisible by $m$. Put $j = mt + u$, so that \eqref{eq:inner-sum} becomes
\begin{equation}\label{eq:split-digits}
 \sum_{0 \leq t < N/m} (r^m)^t \sum_{0 \leq u < m} r^u f(2^{mt+u} s).
\end{equation}
We may now use a caching strategy as follows. Maintain a table $\{T_\sigma\}$ of length $2^m$, where $\sigma \in \{+1, -1\}^m$. Each $T_\sigma$ is a residue modulo $p$, and is initialised to zero. For each $0 \leq t < N/m$, add $(r^m)^t$ to the table element $T_\sigma$, where $\sigma$ is determined from the $t$-th base-$2^m$ digit of $s/p$ by
 \[ \sigma = (f(2^{mt} s), f(2^{mt + 1} s), \ldots, f(2^{mt + m-1} s)). \]
(In practice, $\sigma$ is represented by the sequence of bits themselves, and is obtained by a simple bit-mask.) After finishing the loop over $t$, compute the $2^m$ values $V_\sigma = \sum_{0 \leq u < m} r^u \sigma_u$, where $\sigma = (\sigma_0, \ldots, \sigma_{m-1})$. Then the desired sum is given by $\sum_\sigma V_\sigma T_\sigma$. The main benefit of this approach is that in the inner loop we only perform $N/m$ modular multiplications, instead of $N$. The tradeoff is that we must maintain the table $\{T_\sigma\}$, and some extra work is required at the end to assemble the information from the table. For this to be worthwhile, we should have $N \gg 2^m$. We should also choose $m$ so that the base-$2^m$ digits of $s/p$ can be extracted efficiently. Moreover, memory locality problems can offset the savings in arithmetic if $m$ is too large. We have found in practice that $m = 8$ works quite well.

In the implementation discussed in \S\ref{sec:examples}, we made several further optimisations, which we describe only briefly.

For better memory locality --- and indeed less total memory consumption --- we split \eqref{eq:split-digits} into blocks and process each block separately. This avoids needing to store and then later retrieve too many bits of the expansion of $s/p$. Instead of computing $s/p$ for each $s$ separately, we precompute an approximation to $1/p$, and then multiply it by each $s$ that occurs, taking advantage of the speed of multiplication by a single-word integer compared to the corresponding division.

We use several tables instead of just one. For example, on a 32-bit machine, we use four tables: the highest 8 bits of each word of $s/p$ contribute to the first table, the next 8 bits to the second table, and so on. This reduces further the number of modular multiplications. When adding values into a table, we skip the reduction modulo $p$, delaying this until the end of the loop. Of course, this is only possible when the word size is large enough compared to $p$, otherwise overflow may occur. Finally, instead of re-initialising the table on each iteration of the outer loop, we simply continue to accumulate data into the same table. This partly mitigates against the overhead incurred when $m'$ is unusually large compared to $n'$.

Most of the modular arithmetic uses Montgomery's reduction algorithm \cite{montgomery}.

\section{Performance data}
\label{sec:examples}

The author implemented the above algorithms in a C++ package named bernmm (\emph{Bern}oulli \emph{m}ulti-\emph{m}odular). The source code is released under the GNU General Public License (GPL), and is freely available from the author's web page. It depends on the GMP library \cite{gmp} for arbitrary precision arithmetic, and on NTL \cite{ntl} for some of the single-precision modular arithmetic. It is included as a standard component of the Sage computer algebra system (version 3.1.2) \cite{sage}, accessible via the \texttt{bernoulli} function.

The parallel version distributes the modular computations among the requested number of threads, and depends on the standard pthreads API. For the reconstruction phase, the lowest layers of the subproduct tree are performed in parallel, but the number of threads decreases towards the top of the tree, since the extended GCD routine is not threaded.

Table \ref{tab:timings} shows timing data for $10^4 \leq k \leq 10^8$, spaced by intervals of $\sqrt{10}$, for bernmm and several existing implementations of the zeta-function algorithm. The timings were obtained on a 16-core 2.6GHz AMD Opteron (64-bit) machine with 96 GB RAM, running Ubuntu Linux. The last two rows of the table ($k = \lfloor 10^{7.5} \rfloor$ and $k = 10^8$) improve on the prior record $k = 10^7$ set by Pavlyk \cite{pavlyk}. The values $B_{31622776}$ and $B_{10^8}$ may be downloaded from the author's web page.

The compiler used was gcc 4.1.3. We used the version of GMP shipped with Sage 3.1.2, which is the same as GMP 4.2.2, but also includes assembly code written by Pierrick Gaudry that improves performance on the Opteron, and code by Niels M\"oller that implements a quasi-linear time extended GCD algorithm \cite{moller}. We used NTL version 5.4.1, also included in Sage.

The timings for PARI/GP were obtained for version 2.3.3, using the same patched version of GMP. We used the \texttt{bernfrac} function from the GP interpreter. The timings for calcbn, a specialised program for computing Bernoulli numbers, were obtained from calcbn 2.0 \cite{calcbn}, again using the same patched GMP. The timings for Mathematica were obtained from the Linux x86 (64-bit) version of Mathematica 6.0, using the \texttt{BernoulliB} function. Neither PARI/GP nor Mathematica supply a parallel implementation as far as we are aware.

The peak memory usage for computing $B_{10^8}$ was approximately 5 GB; this occurred during the final extended GCD operation. By comparison, PARI/GP used 4.3 GB for $k = 10^7$, and calcbn used 1.7 GB for $k = 10^7$ (with ten threads).

For each implementation, we observe that the ratio of the times in adjacent rows of the table is approximately 10, reflecting the predicted quasi-quadratic asymptotic running time of both the zeta-function algorithm and the multimodular algorithm.

We checked our results by two methods. First, we checked that as a real number, the proposed value for $B_k$ agrees with the estimate $|B_k| \approx 2(2\pi)^{-k} k!$. Since we computed $B_k$ by modular information alone, this is a very strong consistency check. Second, we reduced modulo $p$ the proposed value for $B_k$, and compared this against the output of Algorithm \ref{algo:simple}, for a few primes \emph{distinct} from those primes that we used to compute $B_k$. Again, this is a very strong consistency check --- stronger in fact, since it involves all of the bits of the proposed $B_k$.

Mathematica, PARI/GP and calcbn use GMP internally, so naturally any improvement in GMP's large integer arithmetic will improve the timings reported in the table. In this connection we mention the improvements in multiplication speed reported by \cite{zimmermann}; their paper also gives comparisons with other implementations of large-integer arithmetic.

\newcommand{\tm}{\hspace{1pt} \mathord{\cdot} \hspace{1pt} }

\begin{table}
\begin{tabular}{ccccccccc}
\toprule
     &        & PARI   & MMA & \multicolumn{2}{c}{calcbn} & \multicolumn{2}{c}{bernmm} \\
&        
     & CPU    & CPU    & CPU    & wall   & CPU     & wall    \\
$k$  & threads
     & time   & time   & time   & time   & time    & time    \\
\midrule
$10^4$
& 1  & 0.07             & 0.18             & 0.04             &                  & 0.25             &                  \\
$\lfloor 10^{4.5} \rfloor$
& 1  & 0.68             & 1.80             & 0.49             &                  & 1.02             &                  \\
$10^5$
& 1  & 7.45             & 21.7             & 5.72             &                  & 5.46             &                  \\
$\lceil 10^{5.5} \rceil$
& 1  & 88.3             & 285              & 82.2             &                  & 38.6             &                  \\
& 2  &                  &                  & 82.3             & 41.6             & 38.6             & 20.4             \\
$10^6$
& 1  & 1058             & 3434             & 935              &                  & 340              &                  \\
& 4  &                  &                  & 967              & 246              & 341              & 96.1             \\
$\lceil 10^{6.5} \rceil$
& 1  & $1.24 \tm 10^4$  & $4.25 \tm 10^4$  &                  &                  & 3397             &                  \\
& 4  &                  &                  & $1.33 \tm 10^4$  & 3385             & 3406             & 909              \\
$10^7$
& 1  & $1.42 \tm 10^5$  & $5.10 \tm 10^5$  &                  &                  &                  &                  \\
& 10 &                  &                  & $1.60 \tm 10^5$  & $1.61 \tm 10^4$  & $3.59 \tm 10^4$  & 3938             \\
$\lfloor 10^{7.5} \rfloor$
& 10 &                  &                  &                  &                  & $3.86 \tm 10^5$  & $4.03 \tm 10^4$  \\
$10^8$
& 10 &                  &                  &                  &                  & $4.19 \tm 10^6$  & $4.27 \tm 10^5$  \\
\bottomrule
\end{tabular}
\caption{Time (in seconds) for computing $B_k$ for PARI/GP, Mathematica, calcbn and bernmm.}
\label{tab:timings}
\end{table}

\section*{Acknowledgments}

Many thanks to Joe Buhler, Bernd Kellner, and Andrew Sutherland for their comments on a draft of this paper, and to the Department of Mathematics at Harvard University for providing the hardware on which the computations were performed.

\bibliographystyle{amsalpha}
\bibliography{bernmm}

\end{document}